\documentclass[a4paper,10pt]{article}
\usepackage[latin1]{inputenc}
\usepackage[T1]{fontenc}
\usepackage{amsmath}
\usepackage{amsfonts}
\usepackage{amstext}
\usepackage{amsthm}
\usepackage[all]{xy}
\usepackage{geometry}
\usepackage{graphicx}
\newtheorem{theorem}{Theorem}[section]
\newtheorem{lemma}[theorem]{Lemma}

\newtheorem{rem}[theorem]{Remark}
\DeclareMathOperator{\supp}{supp}
\DeclareMathOperator{\im}{im}
\DeclareMathOperator{\ind}{ind}

\DeclareMathOperator{\sfl}{sf}

\DeclareMathOperator{\pr}{pr}
\DeclareMathOperator{\sgn}{sgn}

\title{A K-Theoretic Proof of the Morse Index Theorem in Semi-Riemannian Geometry}
\author{Nils Waterstraat}

\begin{document}
\date{}
\maketitle

\begin{abstract}
We give a short proof of the Morse index theorem for geodesics in semi-Riemannian manifolds \cite{Pejsachowicz} by using $K$-theory. This makes the Morse index theorem reminiscent of the Atiyah-Singer index theorem for families of selfadjoint elliptic operators \cite{AtiyahPatodi}.
\end{abstract}

\section{Introduction}
Let $(M,g)$ be a semi-Riemannian manifold of index $\nu$, let $p,q\in M$ be fixed points and let $I=[0,1]$ be the unit interval. Let $H^1_{p,q}(I,M)$ be the set of all paths $\gamma:I\rightarrow M$ that join $p$ and $q$ and are of Sobolev regularity $H^{1,2}$. $H^1_{p,q}(I,M)$ can be given the structure of a Hilbert manifold such that the action functional
\begin{align}\label{Action}
\mathcal{A}:H^1_{p,q}(I,M)\rightarrow\mathbb{R},\quad \mathcal{A}(\gamma)=\int^1_0{g(\dot\gamma,\dot\gamma)dt}
\end{align}
is a smooth function (cf.\cite{DiplomIch}). Moreover, the critical points of $\mathcal{A}$ are precisely the geodesics joining $p$ and $q$. Given such a geodesic $\gamma:I\rightarrow M$, a vector field $\xi\in\Gamma(\gamma)$ along $\gamma$ is called Jacobi field if it satisfies the differential equation
\begin{align}\label{JacobiEquation}
\frac{\nabla^2}{dx^2}\xi+R(\dot\gamma,\xi)\dot\gamma=0,
\end{align}
where $R$ denotes the curvature tensor of $(M,g)$ and $\frac{\nabla}{dx}$ the covariant derivative along $\gamma$. An instant $t\in I$ is said to be conjugate if there exists a nontrivial Jacobi field $\xi\in\Gamma(\gamma)$ such that $\xi(0)$ and $\xi(t)$ vanish.\\
Let us consider solely the Riemannian case $\nu=0$ for a moment. If $p$ and $q$ are chosen appropriately, one can show that $\mathcal{A}$ is a Morse function; i.e., at each critical point the Hessian is nondegenerate and has finite Morse index. Therefore the topology of $H^1_{pq}(I,M)$ (being homotopy equivalent to the based loop space of $M$) can be studied by using the machinery of Morse theory (cf. \cite{MilnorMorse}). Here a particularly important tool is given by the Morse index theorem, which states that the Morse index at a critical point $\gamma$ can be computed as
\begin{align}\label{mit}
\ind(\gamma)=\sum_{x\in I}{m(x)},
\end{align}
where $m(x)$ is the multiplicity of $x$ as conjugate instant, i.e. the dimension of the vector space spanned by Jacobi-fields along $\gamma$ that vanish at $0$ and $x$. Using this result, the computation of the Morse index of the Hessian at a given critical point reduces to solve a second order ordinary linear differential equation!\\
However, in arbitrary semi-Riemannian manifolds phenomena may occur that exclude an equality like \eqref{mit}. To be more precise, if $\nu\neq 0$, geodesics never have a finite Morse index and conjugate instants may accumulate such that the sum on the right hand side of \eqref{mit} is not even meaningful in general. Moreover, conjugate instants can disappear under arbitrary small perturbations of the geodesics, which is also in contrast to the stability of the Morse index in the cases where it exists. Hence, in order to find an index theorem valid for geodesics in arbitrary semi-Riemannian manifolds which generalizes the known results, one has to find some kind of renormalized Morse index and a way of generalized counting of conjugate points.\\
We do not want to present the full history of this issue, but mention that the first breakthrough was obtained by Helfer in \cite{HELFER}. Afterwards Piccione, Tausk and others developed his ideas further in a series of papers (cf. e.g. \cite{PiccioneMITinSRG} and the references given there). In this paper we focus on a more recent result by Musso, Pejsachowicz and Portaluri \cite{Pejsachowicz}. They gave for each side of the classical Morse index theorem a further substitute extending it to the general semi-Riemannian case and proved their equality as well as that their indices coincide with the former ones. These integers are called the spectral index and the conjugate index of the given geodesic $\gamma$. The spectral index, which generalizes the classical Morse index, is by definition the spectral flow of a path of bounded selfadjoint Fredholm operators associated by the Riesz representation theorem to the Hessian of the smooth function \eqref{Action} at the critical point $\gamma$. The conjugate index, which generalizes the classical counting of conjugate points, is the winding number of a closed curve surrounding $0\in\mathbb{C}$ which can be obtained from the differential equation \eqref{JacobiEquation}. The proof of their equality in \cite{Pejsachowicz} uses functional analytic methods. Our aim is to present a different proof which uses topological methods and can be roughly described as follows: The starting point is the original construction of spectral flow in \cite{AtiyahPatodi}, where to each closed path of selfadjoint Fredholm operators a virtual bundle in $K^{-1}(S^1)\cong\tilde{K}(S^2)$ is constructed such that the spectral flow of the path is just the integer obtained from the well-known isomorphism $c_1:K^{-1}(S^1)\rightarrow\mathbb{Z}$ given by the first Chern number. We modify this result slightly, obtaining an assignment of a virtual bundle in $K_c(\mathbb{C})$ for a special class of paths of selfadjoint Fredholm operators with invertible ends. Again the spectral flow can be computed by the isomorphism given by first Chern number $c_1:K_c(\mathbb{C})\rightarrow\mathbb{Z}$, and the second important observation is the well-known result that this integer can be calculated as the winding number of the clutching function of the given element in $K_c(\mathbb{C})\cong\tilde{K}(S^2)$. Hence the strategy of our proof is to consider the element in $K_c(\mathbb{C})\cong\tilde{K}(S^2)$ whose first Chern number is the spectral index of the given geodesic and to deform its clutching function such that the associated winding number turns out to be the conjugate index.\\
The paper is structured as follows: In the second section we give a short summary of the main result in \cite{Pejsachowicz}. The third section is devoted to the proof of this result and is decomposed in three subsections. In the first subsection we summarize briefly the definition of K-theory with compact supports and the computation of the first Chern number of elements in $K_c(\mathbb{C})$. The second subsection deals with spectral flow and is of independent interest. In the last subsection we finally present the alternative proof of the Morse index theorem in semi-Riemannian geometry \cite{Pejsachowicz}.\\
The author wishes to express his gratitude to one of the authors of \cite{Pejsachowicz}, Jacobo Pejsachowicz (Politecnico di Torino), for suggesting the problem and his interest in this work. Moreover, he wants to thank Jacobo Pejsachowicz and Thomas Schick (University of G\"ottingen) for several helpful suggestions improving the presentation of the paper. Finally, the author wishes to thank the Research Training Group 1493, ``Mathematical Structures in Modern Quantum Physics'', for financial support.

\section{The Morse Index Theorem in semi-Riemannian Geometry}
It is a well known result that the Hessian $Hess_{\gamma}(\mathcal{A}_{pq}):T_\gamma H^1_{pq}(I,M)\times T_\gamma H^1_{pq}(I,M)\rightarrow\mathbb{R}$ of the functional \eqref{Action} at a critical point $\gamma\in H^1_{p,q}(I,M)$ is given by
\begin{align*}
Hess_{\gamma}(\mathcal{A}_{pq})(\xi,\eta)&=\int^1_0{g_{\gamma(x)}\left(\frac{\nabla}{dx}\xi(x),\frac{\nabla}{dx}\eta(x)\right)dx}-\int^1_0{g_{\gamma(x)}(R(\gamma'(x),\xi(x))\gamma'(x),\eta(x))dx},
\end{align*}
where $\xi,\eta\in T_\gamma H^1_{p,q}(I,M)=\{\xi\in H^1(I,TM): \xi(x)\in T_{\gamma(x)}M\;\forall x\in I,\xi(0)=0,\xi(1)=0\}$.\\
These definitions suffice to state the Riemannian version of the Morse index theorem as the well-known equality of the finite Morse index of the Hessian and the number of conjugate points along the geodesic counted with multiplicity as already mentioned in the introduction. To overcome the rising difficulties in the general semi-Riemannian case, as in \cite{Pejsachowicz}, some variations are necessary.\\
Let $\gamma$ be a geodesic joining two points $p$ and $q$ such that $1$ is not a conjugate instant. $\gamma$ induces a path $\Gamma:I\rightarrow H^1(I,M)$ by $\gamma_t(x)=\gamma(t\cdot x)$ and $\gamma_t$ is a critical point of the restricted action functional $\mathcal{A}_t:H^1_{p,\gamma(t)}(I,M)\rightarrow\mathbb{R}$. 
This yields a family of quadratic forms 
\begin{align*}
hess_t(\mathcal{A}):T_{\gamma_t}H^1_{p,\gamma(t)}(I,M)\rightarrow\mathbb{R}
\end{align*}
associated to the Hessians that can be transformed into a smooth path of quadratic forms on $H^1_0([0,1],\mathbb{R}^n)$ by using local coordinates induced by a parallel frame $\{e^1,\ldots,e^n\}$ along $\gamma$, where
\begin{align*}
g(e^i(x),e^i(x))=\begin{cases}
1,\;i\leq n-\nu\\
-1,\;i>n-\nu
                 \end{cases},\quad g(e^i(x),e^j(x))=0,\; i\neq j,\; x\in I.
\end{align*}
The resulting path is given by
\begin{align*}
\begin{split}
&q_t:H^1_0(I,\mathbb{R}^n)\rightarrow\mathbb{R},\\ 
q_t(u)=&\int^1_0{\langle Ju'(x),u'(x)\rangle dx}-\int^1_0{\langle S_t(x)u(x),u(x)\rangle dx},
\end{split}
\end{align*}
where
\begin{align*}
J=\begin{pmatrix}
id_{n-\nu}&0\\
0&-id_{\nu}
\end{pmatrix}, 
\end{align*}
$S_t(x)=t^2\cdot S(t\cdot x)$, $t,x\in[0,1]$, and $S(x)=\{S_{ij}(x)\}$, $S_{ij}(x)=g(R(\gamma'(x),e^j(x))\gamma'(x),e^i(x))$.
It is easy to see that $S(x)$ is symmetric for all $x\in I$.\\
Using the Riesz representation theorem, we obtain a path $L:I\rightarrow\mathcal{L}(H^1_0(I,\mathbb{R}^n))$ of bounded selfadjoint operators such that
\begin{align}\label{RIESZDARSTELLUNG-2}
q_t(u)=\langle L_tu,u\rangle_{H^1_0(I,\mathbb{R}^n)}\quad \text{ for all}\; u\in H^1_0(I,\mathbb{R}^n)
\end{align}
and $L_0,L_1\in GL(H^1_0(I,\mathbb{R}^n))$, where the invertibility of $L_1$ follows from the assumption that $1$ is not a conjugate instant and integration by parts. In \cite[Proposition 3.1]{Pejsachowicz} it is shown that these operators actually are Fredholm. The \textbf{spectral index} of the geodesic $\gamma$ is defined as $\mu_{spec}(\gamma)=-\sfl(L)$, where $\sfl$ denotes the spectral flow.\\
The other important modification of the classical quantities from the Morse index theorem is the counting of conjugate points along a geodesic by means of the winding number as already mentioned in the introduction. Using once again the orthonormal frame $\{e^1,\ldots,e^n\}$, the Jacobi equations \eqref{JacobiEquation} associated to the geodesics $\gamma_t$ are given by
\begin{align}\label{JacobiDGL}
Ju''(x)+S_t(x)u(x)=0.
\end{align}
Note at first that the corresponding differential operators $\mathcal{A}_tu=Ju''+S_tu$ are unbounded selfadjoint Fredholm operators on $L^2(I,\mathbb{R}^n)$ with domain $H^2(I,\mathbb{R}^n)\cap H^1_0(I,\mathbb{R}^n)$. Moreover, $t\in I$ is a conjugate instant if and only if $\ker\mathcal{A}_t\neq\{0\}$. Now let $S_t:=S_0$ for $t<0$, $S_t:=S_1$ for $t>1$ and let $b^1_z,\ldots,b^n_z:I\rightarrow\mathbb{C}^n$ be the solutions of the differential equations
\begin{align*}
Ju''+S_t(x)u+isu=0,\quad z=t+is\in\mathbb{C},
\end{align*}
such that $b^i_z(0)=0$ and $(b^i_z)'(0)=Je_i$, $i=1,\ldots,n$. We denote by $b_z(x)$ the matrix\linebreak $(b^1_z(x),\ldots,b^n_z(x))$ and $b_z:=b_z(1)$. The crucial observation for the definition of the conjugate index is given by the following lemma, which can be found in \cite{Pejsachowicz}. Nevertheless, we include its proof for the sake of completeness.

\begin{lemma}\label{binv}
$b_z$, $z=t+is\in\mathbb{C}$, is not invertible if and only if $s=0$ and $\ker\mathcal{A}_t\neq\{0\}$.
\end{lemma}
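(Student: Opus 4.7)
The plan is to translate the non-invertibility of the matrix $b_z(1)$ into the existence of a nontrivial solution of the perturbed Jacobi equation with Dirichlet boundary conditions, and then invoke selfadjointness of $\mathcal{A}_t$ to force $s=0$.

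First I would observe that $J^2=I$, so the initial condition $b_z'(0)=J$ is invertible; hence $b_z(x)v$ is nontrivial as soon as $v\in\mathbb{C}^n$ is nonzero (since its derivative at $0$ is $Jv\neq 0$). Now $b_z(1)$ fails to be invertible iff there exists $v\neq 0$ with $b_z(1)v=0$. Setting $u(x):=b_z(x)v$, linearity of the ODE gives a nontrivial solution of
\begin{equation*}
Ju''(x)+S_t(x)u(x)+is\,u(x)=0
\end{equation*}
with $u(0)=u(1)=0$. Conversely, any nontrivial solution $u$ of this ODE with $u(0)=u(1)=0$ can be written uniquely as $u(x)=b_z(x)v$ with $v=J^{-1}u'(0)$, and $v\neq 0$ since $u\not\equiv 0$; then $u(1)=b_z(1)v=0$ exhibits $b_z(1)$ as singular. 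Thus non-invertibility of $b_z(1)$ is equivalent to the existence of a nontrivial $u\in H^2(I,\mathbb{C}^n)\cap H^1_0(I,\mathbb{C}^n)$ with $(\mathcal{A}_t+is)u=0$, i.e.\ to $-is$ being an eigenvalue of the complexification of $\mathcal{A}_t$.

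Now I would use the selfadjointness of $\mathcal{A}_t$ on $L^2(I,\mathbb{R}^n)$ (stated just before the lemma), which extends to the complexification on $L^2(I,\mathbb{C}^n)$ with the domain $H^2\cap H^1_0$. Since a selfadjoint operator has real spectrum, the eigenvalue $-is$ must be real, forcing $s=0$. In that case the equation reduces to $\mathcal{A}_t u=0$ with $u\in H^2\cap H^1_0$, which is exactly the statement $\ker\mathcal{A}_t\neq\{0\}$. Conversely, if $s=0$ and $u\in\ker\mathcal{A}_t$ is nontrivial, the regularity of the ODE and the boundary conditions $u(0)=u(1)=0$ immediately yield that $b_0(1)$ is singular by the equivalence established in the first step.

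The only mildly subtle point is the passage from a real selfadjoint operator to the complex eigenvalue problem, but this is routine: the operator $\mathcal{A}_t$ has real coefficients and the same differential expression defines a selfadjoint operator on $L^2(I,\mathbb{C}^n)$, so its spectrum remains real. Apart from this, all steps are essentially bookkeeping about fundamental matrices of linear ODEs.
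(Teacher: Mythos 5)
Your argument is correct and is essentially the paper's own proof: a kernel vector $v$ of $b_z(1)$ gives a nontrivial Dirichlet solution $u(x)=b_z(x)v$ of $Ju''+S_tu+isu=0$, selfadjointness of (the complexification of) $\mathcal{A}_t$ forces $s=0$, and conversely any nontrivial Dirichlet solution is $b_z(x)Ju'(0)$, exhibiting $b_z(1)$ as singular. The only blemish is the notation $b_0(1)$ near the end, which should read $b_z(1)$ with $z=t+i\cdot 0$; otherwise the reasoning matches the paper's step for step.
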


\begin{proof}
If $b_z$ is not invertible, take $0\neq v\in\ker b_z$ and define $w(x)=b_z(x)v$. Then $Jw''+S_tw+isw=0$, $w(0)=w(1)=0$, and by selfadjointness of $\mathcal{A}_t$ this is just possible if $s=0$.\\
On the other hand, if $Ju''+S_tu+isu=0$, $u(0)=u(1)=0$, it is easy to see that $0=u(1)=b_z(1)Ju'(0)$. Hence if $\det b_z\neq 0$, this implies $u'(0)=0$ and therefore $u=0$.  
\end{proof}

Let $c:S^1\rightarrow\mathbb{C}$ be any simple closed positively oriented path surrounding $I\times\{0\}\subset\mathbb{C}$ (i.e. having winding number $1$ with respect to any point in $I$). This induces a path $c_\gamma:S^1\rightarrow\mathbb{C}\setminus\{0\}$ by $c_\gamma(\varphi)=\det b_{c(\varphi)}$, and by definition the \textbf{conjugate index} of the geodesic is
\begin{align*}
\mu_{con}(\gamma)=w(c_\gamma,0),
\end{align*} 
where the latter denotes the winding number of $c_\gamma$ with respect to $0$.\\
With all this said we can state the Morse index theorem \cite{Pejsachowicz}.

\begin{theorem}\label{MIT}
Let $(M,g)$ be a semi-Riemannian manifold and let $\gamma:I\rightarrow M$ be a geodesic such that $1$ is not a conjugate instant. Then 
\begin{align*}
\mu_{spec}(\gamma)=\mu_{con}(\gamma).
\end{align*}
\end{theorem}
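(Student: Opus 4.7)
The plan is to realise the strategy from the introduction: associate to the path $L$ a virtual bundle in $K_c(\mathbb{C})$, compute its first Chern number, and show that this integer equals both $\mu_{spec}(\gamma)$ and $\mu_{con}(\gamma)$.

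First, I would extend $L$ to a two-parameter family over $\mathbb{C}$. Setting $S_t := S_0$ for $t<0$ and $S_t := S_1$ for $t>1$, the complexified differential operators $\mathcal{A}_z u = Ju'' + S_t u + isu$ with $z = t+is$ are selfadjoint Fredholm on $L^2(I,\mathbb{C}^n)$ for $s=0$ and, by the proof of Lemma \ref{binv}, invertible for $s \neq 0$. Riesz-representing the associated sesquilinear forms on the complexification of $H^1_0(I,\mathbb{R}^n)$ produces a continuous family $L_z \in \mathcal{L}(H^1_0(I,\mathbb{C}^n))$ that extends $L$ and whose non-invertibility locus is a compact subset of $I \times \{0\}$. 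By the construction to be developed in the subsection on spectral flow, this family determines a virtual bundle $[E] \in K_c(\mathbb{C})$, and the modified Atiyah--Patodi type theorem proved there will yield
\begin{align*}
c_1([E]) = -\sfl(L) = \mu_{spec}(\gamma).
\end{align*}

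Second, under the isomorphism $K_c(\mathbb{C}) \cong \tilde{K}(S^2)$ the class $[E]$ is represented by a clutching function $f:S^1 \to GL$ on the equator, and its first Chern number equals the winding number of $\det f$ around $0$. I would take the equator to be precisely the loop $c$ from the definition of the conjugate index. The principal part of $q_t$, coming from $\int \langle Ju',u'\rangle$, Riesz-represents a fixed bounded invertible operator (decomposed by the signature of $J$), while the $S_t$-term gives a compact perturbation; hence $L_z$ is of invertible-plus-compact type along $c$. A Lyapunov--Schmidt reduction then reduces the invertibility question along $c$ to a finite-dimensional one on an $n$-dimensional subspace, and the proof of Lemma \ref{binv}, which matches $\ker \mathcal{A}_z$ with $\ker b_z$ via the shooting map $v \mapsto b_z(\cdot)v$, suggests that the reduced operator is homotopic through invertible matrices to $b_z$.

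The main obstacle will be making this identification uniform on the entire loop $c$: near each conjugate instant the matrix $b_z$ captures $\ker \mathcal{A}_z$ by construction, but away from the conjugate locus one must verify that the ambient deformation from the clutching function to $b_z$ stays within invertible operators and preserves the class in $K_c(\mathbb{C})$. Once this deformation is achieved, $\det f$ and $c_\gamma = \det b_{c(\cdot)}$ become homotopic in $C(S^1, \mathbb{C}\setminus\{0\})$, and the chain
\begin{align*}
\mu_{spec}(\gamma) = c_1([E]) = w(\det f,0) = w(c_\gamma, 0) = \mu_{con}(\gamma)
\end{align*}
completes the proof.
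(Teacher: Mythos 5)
Your overall strategy is the paper's own: build a virtual bundle in $K_c(\mathbb{C})$ from the path, identify its first Chern number with the spectral flow (hence with $\mu_{spec}(\gamma)$ after the sign bookkeeping $\mu_{spec}(\gamma)=-\sfl(L)=\sfl(\mathcal{A})$, which you gloss over but which requires the integration-by-parts identity relating $L_t$ and $\mathcal{A}_t$ -- precisely the point where the sign error in \cite{Pejsachowicz} occurred), and then read off that Chern number as the winding number of a clutching function. The first half is essentially sound: working with the Riesz-represented bounded family $L_t+isK$ instead of the unbounded family $\mathcal{A}_t+is\,id$ is a harmless variant (a linear homotopy between $K$ and $id$ in the imaginary part stays invertible off the real axis and has support in $I\times\{0\}$), so an analogue of the spectral-flow-equals-Chern-number lemma is available.

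The genuine gap is in the second half, and you flag it yourself without closing it: you never actually identify the clutching function with the shooting matrix $b_z$. Saying that a Lyapunov--Schmidt reduction ``suggests that the reduced operator is homotopic through invertible matrices to $b_z$'' is not an argument; matching $\ker\mathcal{A}_t$ with $\ker b_t$ at real conjugate instants (Lemma \ref{binv}) only controls where invertibility fails, whereas what is needed is a \emph{global} trivialization of the two bundles in the triple and a homotopy, through invertible maps over a neighbourhood of the loop $c$, from the trivialized clutching map to $b_z$ (or $b_z^T$), so that the winding numbers of the determinants agree. This is the actual content of the theorem, and it is where all the work lies in the paper's proof: one passes from the second-order family to the first-order system $M_zw=\sigma w'+H_zw$ with fixed boundary conditions, conjugates by the symplectic fundamental solution $\Psi_z$ to obtain the constant operator $\sigma w'$ on $z$-dependent domains (which requires the Banach-bundle lemma for the varying boundary conditions and the logarithmic property of the index bundle to discard the conjugating isomorphisms), and only then does an explicit finite-dimensional computation, using the splitting $L^2=Y_1\oplus Y_2$ and the symplectic relation $\Psi_z^{-1}=-\sigma\Psi_z^T\sigma$, exhibit the clutching map as $id\oplus(-b_z^T)$ up to homotopy. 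Without some substitute for this chain -- a concrete reduction valid uniformly over $c$, with controlled trivializations, not just a pointwise kernel correspondence -- the equality $w(\det f,0)=w(c_\gamma,0)$ in your final display is unsupported, and the proof is incomplete.
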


Finally, let us point out that a minor mistake appeared in the proof of this theorem in \cite{Pejsachowicz}. As a consequence, our definition of the conjugate index differs from the former one by sign. We will explain this more precisely in the first step of our proof below.

\section{The Proof}

\subsection{Preliminaries I:\quad K-Theory with Compact Supports}
The aim of this section is a brief summary of the basics of K-theory with compact supports. For a more detailed exposition we refer to \cite[section 3.2]{Fedosov}.\\ 
Let $X$ be a manifold. A virtual bundle with compact support is a triple $\xi=[E^0,E^1,a]$, where $E^0, E^1$ are vector bundles of the same dimension and $a:E^0\rightarrow E^1$ is a bundle isomorphism defined over $X\setminus\supp\xi$, where $\supp\xi\subset X$ is a compact subset called the support of $\xi$. Such an element is called trivial if $a$ can be extended to an isomorphism over the whole of $X$.  A sum is defined on the set of all virtual bundles with compact support on $X$ in the obvious way. Two triples $\xi_0=[E^0_0,E^1_0,a_0]$, $\xi_1=[E^0_1,E^1_1,a_1]$ are called isomorphic if there are bundle isomorphisms $\varphi^i:E^i_0\rightarrow E^i_1$, $i=0,1$, such that the diagram
\begin{align*}
\xymatrix{E^0_0\ar[r]^{a_0}\ar[d]_{\varphi^0}&E^1_0\ar[d]^{\varphi^1}\\
E^0_1\ar[r]^{a_1}&E^1_1
}
\end{align*}
is commutative over $X\setminus(\supp\{\xi_0\}\cup\supp\{\xi_1\})$. Finally, an equivalence relation can be introduced by defining $\xi_1\sim\xi_2$ if there are trivial virtual bundles $\eta_1,\eta_2$ such that $\xi_1+\eta_1$ is isomorphic to $\xi_2+\eta_2$. The equivalence classes form a group $K_c(X)$ with respect to the defined sum. Additionally, the following properties hold:
\begin{itemize}
\item If $a(t):E^0\rightarrow E^1$, $t\in[0,1]$, is a homotopy of bundle isomorphisms which are defined outside a fixed compact subset of $X$, then 
\begin{align*}
[E^0,E^1,a(0)]=[E^0,E^1,a(t)]\in K_c(X)\qquad\text{ for all }t\in[0,1].
\end{align*} 
\item If $\xi_0=[E^0,E^1,a_0],\; \xi_1=[E^1,E^2,a_1]\in K_c(X)$, their sum is given by
\begin{align*}
\xi_0+\xi_1=[E^0,E^2,a_1\circ a_0]\in K_c(X).
\end{align*}
This is called the logarithmic property of $K_c(X)$.
\end{itemize}

Finally, we will need the following essentially well-known assertion, whose proof is left to the reader.

\begin{lemma}\label{chern}
Let $[E_0,E_1,a]\in K_c(\mathbb{C})$ and $c:S^1\rightarrow\mathbb{C}$ be any simple closed positively oriented path surrounding  $\supp a$. Moreover, let $\psi: E_0\rightarrow\Theta(\mathbb{C}^n)$ and $\varphi:E_1\rightarrow\Theta(\mathbb{C}^n)$ be global trivializations. Then
\begin{align*}
c_1([E_0,E_1,a])=w(\det(\varphi\circ a\circ\psi^{-1})\circ c,0)\in\mathbb{Z},
\end{align*}
where $c_1:K_c(\mathbb{C})\rightarrow\mathbb{Z}$ denotes the first Chern number.
\end{lemma}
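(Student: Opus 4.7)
The plan is to reduce the statement to the standard clutching description of bundles over $S^2$ and then invoke the classical identification of the first Chern number with the winding number of the determinant of a clutching function.

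First, I would use the given trivializations to replace the triple by a representative with trivial bundles on both sides. Set $\tilde a:=\varphi\circ a\circ\psi^{-1}$; this is a bundle map $\Theta(\mathbb{C}^n)\to\Theta(\mathbb{C}^n)$ defined outside $\supp a$, and by the definition of isomorphism of virtual bundles in the excerpt the pair $(\psi,\varphi)$ yields $[E_0,E_1,a]=[\Theta(\mathbb{C}^n),\Theta(\mathbb{C}^n),\tilde a]\in K_c(\mathbb{C})$. So it suffices to prove the lemma under the assumption $E_0=E_1=\Theta(\mathbb{C}^n)$ and $a=\tilde a$, a map into $GL_n(\mathbb{C})$ over $\mathbb{C}\setminus\supp\tilde a$.

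Second, I would pass to $\tilde K(S^2)$ via the standard isomorphism $K_c(\mathbb{C})\cong\tilde K(S^2)$ obtained by one-point compactification and the difference bundle construction. Pick any closed disk $D_+\subset\mathbb{C}$ with $\supp\tilde a\subset\mathrm{int}\,D_+$ and $\partial D_+=c(S^1)$; let $D_-$ be the closure of its complement in $S^2=\mathbb{C}\cup\{\infty\}$. The virtual bundle $[\Theta(\mathbb{C}^n),\Theta(\mathbb{C}^n),\tilde a]$ corresponds, under this isomorphism, to the vector bundle on $S^2$ obtained by clutching two trivial rank-$n$ bundles on $D_+$ and $D_-$ via the transition function $\tilde a\circ c:S^1\to GL_n(\mathbb{C})$. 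The homotopy invariance property listed in the excerpt shows that this class is independent of the choice of $c$ within the homotopy class of loops surrounding $\supp\tilde a$, which matches the invariance of the winding number.

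Third, I would apply the classical fact that the first Chern number of a rank-$n$ vector bundle over $S^2$ given by a clutching function $g:S^1\to GL_n(\mathbb{C})$ equals $w(\det g,0)$. This reduces via the determinant $GL_n(\mathbb{C})\to\mathbb{C}^\ast$ (which induces an isomorphism on $\pi_1$) to the rank-one case, where $c_1$ of the line bundle on $S^2$ with clutching function $\det g$ is by definition its degree, i.e.\ the winding number of $\det g$ around $0$. Applied to $g=\tilde a\circ c$, this yields
\begin{align*}
c_1([E_0,E_1,a])=w(\det(\tilde a\circ c),0)=w(\det(\varphi\circ a\circ\psi^{-1})\circ c,0),
\end{align*}
as claimed. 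The only delicate point is bookkeeping of orientations and conventions: one must check that the orientation of $S^2$ inherited from $\mathbb{C}$ through the compactification, the orientation of $c$ as a positively oriented loop in $\mathbb{C}$, and the convention that the clutching is performed with $\tilde a$ (not $\tilde a^{-1}$) together produce the correct sign, so that the positively oriented loop on the left corresponds to the positive generator of $H^2(S^2)$ on the right.
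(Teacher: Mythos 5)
Your proposal is correct and is essentially the argument the paper intends: the paper explicitly leaves this lemma's proof to the reader, and its introduction already describes the first Chern number of an element of $K_c(\mathbb{C})\cong\tilde{K}(S^2)$ as the winding number of the determinant of its clutching function, which is exactly the reduction you carry out (trivialize both sides, pass to the one-point compactification, clutch along $c$, and use that $\det:GL_n(\mathbb{C})\rightarrow\mathbb{C}\setminus\{0\}$ is an isomorphism on $\pi_1$). The only outstanding item is the orientation and sign bookkeeping you flag at the end; this should be settled, for instance by evaluating the conventions on the rank-one model $a(z)u=zu$ over $\mathbb{C}$, whose class has first Chern number $1$ and whose determinant has winding number $1$ along any positively oriented loop around the support, but it is routine and does not affect the validity of your approach.
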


\subsection{Preliminaries II: On Spectral Flow}
Spectral flow is an integer valued homotopy invariant of paths of selfadjoint Fredholm operators introduced in \cite{AtiyahPatodi}. Here our basic reference is the paper \cite{Robbin-Salamon} by Robbin and Salamon, where spectral flow is defined for a certain class of unbounded selfadjoint Fredholm operators with fixed domain. Without repeating any details of their construction, we want to introduce spectral flow by means of a uniqueness result.\\
In the following, let $H$ be a real Hilbert space and let $W\subset H$ be a Hilbert space in its own right with a compact dense injection $W\hookrightarrow H$. We define a subspace $\mathcal{FS}(W,H)\subset\mathcal{L}(W,H)$ of the bounded operators with the norm topology consisting of those operators which are selfadjoint when regarded as unbounded operators in $H$ with dense domain $W$. Note that each element in $\mathcal{FS}(W,H)$ has a compact resolvent and hence is in particular a Fredholm operator.

\begin{theorem}[\cite{Robbin-Salamon},Theorem 4.3]\label{Robbin-Salamon Theorem}
There exist unique maps 
\begin{align*}
\sfl:\{\mathcal{A}:I\rightarrow\mathcal{FS}(W,H):\mathcal{A}\;\text{continuous},\mathcal{A}_0,\mathcal{A}_1\;\text{invertible}\}\rightarrow\mathbb{Z},
\end{align*}
one for every choice of $H$ and $W$ as above, satisfying the following axioms:
\begin{enumerate}
\item $\sfl$ is invariant under homotopies inside $\mathcal{FS}(W,H)$ through paths with invertible ends.
\item If $\mathcal{A}$ is constant, then $\sfl(\mathcal{A})=0$.
\item If $\mathcal{A}^1\oplus\mathcal{A}^2:W_1\oplus W_2\rightarrow H_1\oplus H_2$ is the pointwise direct sum of $\mathcal{A}^1\in\mathcal{FS}(W_1,H_1)$ and $\mathcal{A}^2\in\mathcal{FS}(W_2,H_2)$, then $\sfl(\mathcal{A}^1\oplus\mathcal{A}^2)=\sfl(\mathcal{A}^1)+\sfl(\mathcal{A}^2)$.
\item If the concatenation $\mathcal{A}^1\ast\mathcal{A}^2$ is defined, then $\sfl(\mathcal{A}^1\ast\mathcal{A}^2)=\sfl(\mathcal{A}^1)+\sfl(\mathcal{A}^2)$.
\item For $W=H=\mathbb{R}$ and $\mathcal{A}_t=\arctan(t-\frac{1}{2})$, we have $\sfl(\mathcal{A})=1$.
\end{enumerate}
\end{theorem}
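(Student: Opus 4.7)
The plan is to handle existence and uniqueness separately. For existence, the standard route is to first define $\sfl$ on the dense subspace of $C^1$-paths with only regular crossings (instants $t_0$ where $\ker \mathcal{A}_{t_0}\neq\{0\}$ and the crossing form $\Gamma(\mathcal{A},t_0)(u,v)=\langle \dot{\mathcal{A}}_{t_0}u,v\rangle$ on $\ker \mathcal{A}_{t_0}$ is nondegenerate) as the sum of signatures of the crossing forms, and then extend to all continuous paths by approximation. Verifying axioms (2)--(5) on the model paths is direct; the technical point is axiom (1), which amounts to checking that the crossing form sum is invariant under homotopies through regular crossings and that regular paths are dense and path-connected in the space of continuous paths with invertible ends.

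For uniqueness, suppose $\sfl$ and $\sfl'$ both satisfy (1)--(5); I would reduce to a model situation in three stages. First, by openness of invertibility and density of smooth paths, axiom (1) reduces the comparison to $C^1$-paths with only regular crossings. Axiom (4) then lets me chop such a path into subintervals each containing at most one crossing, so I may assume there is a single regular crossing at some $t_0\in(0,1)$. Second, I would use the compact resolvent to obtain a spectral gap: there is $\delta>0$ and a neighbourhood $U$ of $t_0$ such that $\pm\delta\notin\sigma(\mathcal{A}_t)$ for $t\in U$, and the Riesz projection $P_t=\tfrac{1}{2\pi i}\oint_{|\lambda|=\delta}(\lambda-\mathcal{A}_t)^{-1}\,d\lambda$ is norm-continuous in $t$ with finite-dimensional range $V_t$. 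Choosing a continuous orthonormal frame for $V_t$ and using that $\mathcal{A}_t\restriction V_t^\perp$ is invertible on $U$, axioms (1), (2) and (3) collapse the path (up to a homotopy with invertible ends) to a smooth path of symmetric matrices on a fixed finite-dimensional space, the invertible complementary part contributing zero.

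Third, in the finite-dimensional case I would diagonalize continuously near the crossing and use axiom (3) once more to write the path as a direct sum of one-dimensional paths of symmetric scalars. Each such scalar path either stays nonzero (spectral flow zero by axioms (1) and (2)) or changes sign once at $t_0$; in the latter case it is homotopic rel endpoints to $\pm\arctan(t-\tfrac{1}{2})$, so axioms (1) and (5) pin down its value as $\pm 1$. This determines $\sfl$ on the model paths, so $\sfl=\sfl'$ everywhere.

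The hard part will be the spectral decomposition step: the apparent discontinuity of the pointwise kernels $\ker\mathcal{A}_t$ must be absorbed into a genuinely norm-continuous Riesz projection, and one must verify that the decomposition $\mathcal{A}_t = \mathcal{A}_t\restriction V_t \oplus \mathcal{A}_t\restriction V_t^\perp$ gives a continuous path in $\mathcal{FS}(W,H)$ after trivializing $V_t$ and its complement, so that the direct-sum axiom (3) can legitimately be invoked. The density/homotopy step in the reduction to $C^1$ paths with only regular crossings is also nontrivial and relies on openness of $GL$ at the endpoints together with transversality arguments in $\mathcal{FS}(W,H)$.
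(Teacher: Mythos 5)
The paper does not prove this theorem; it is stated as a direct citation of Robbin and Salamon's Theorem~4.3 in \cite{Robbin-Salamon}, so there is no in-paper argument to compare your sketch against. That said, your sketch does follow the strategy of the cited reference, and the broad skeleton -- crossing forms for existence; homotopy, catenation, a Riesz-projection reduction to a single finite-dimensional regular crossing, and the normalization axiom for uniqueness -- is sound.

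Two points in your final step need sharpening. First, ``diagonalize continuously near the crossing'' is misleading: a $C^1$ path of symmetric matrices cannot in general be continuously diagonalized near an eigenvalue collision, and regularity of the crossing does not by itself give continuously varying eigenframes for $\mathcal{A}_t$. The correct manoeuvre is to use the homotopy axiom on the finite-dimensional block to replace the path near $t_0$ by its linearization $(t-t_0)\,\dot{\mathcal{A}}_{t_0}\big|_{\ker\mathcal{A}_{t_0}}$, and then conjugate by a single \emph{fixed} orthogonal transformation diagonalizing the constant symmetric operator $\dot{\mathcal{A}}_{t_0}\big|_{\ker\mathcal{A}_{t_0}}$; it is this constant conjugation that produces the direct sum of scalar paths. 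Second, axioms (1) and (5) alone do not determine $\sfl$ on a scalar path crossing zero with negative slope, since axiom (5) normalizes only the positive-slope crossing $\arctan(t-\tfrac12)$. To obtain $\sfl(-\arctan(t-\tfrac12))=-1$ you must additionally invoke either the catenation axiom (4) or, alternatively, axioms (2) and (3) together with a two-dimensional homotopy: the path $\diag\bigl(\arctan(t-\tfrac12),\,-\arctan(t-\tfrac12)\bigr)$ is homotopic through paths with invertible ends to a constant invertible $2\times2$ matrix, hence has vanishing spectral flow, and additivity under direct sum then forces the negative-slope contribution to be $-1$.
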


\begin{rem}\label{remSF}
Condition (4) is in fact not necessary for the uniqueness, as shown in \cite{Robbin-Salamon}. 
\end{rem}

Below we will give an explicit construction of spectral flow which is different from the one in \cite{Robbin-Salamon}.
We begin by introducing a variant of the Atiyah-J\"anich bundle for families of Fredholm operators of index $0$, which is adapted from \cite{PejsachowiczII}.\\
Let $\Lambda$ be a manifold, let $\mathcal{X}$ be a Banach bundle over $\Lambda$ and let $Y$ be a Banach space. Moreover, let $L:\mathcal{X}\rightarrow\Theta(Y)$ be a bundle morphism that is fibrewise Fredholm of index $0$ and has compact support
\begin{align*}
\supp L=\{\lambda\in\Lambda:L_\lambda\notin GL(\mathcal{X}_\lambda,Y)\}.
\end{align*}
Here $GL$ stands for bounded invertible operators between fixed Banach spaces. Moreover, in the following we will denote the set of all such bundle morphisms by $\mathfrak{F}_{0,c}(\mathcal{X},\Theta(Y))$.\\ 
Using the compactness assumption, it is not difficult to show that there exists a finite dimensional subspace $V\subset Y$ such that
\begin{align*}
\im L_{\lambda}+V=Y\quad\forall\lambda\in\Lambda.
\end{align*}
We obtain a surjective bundle morphism
\begin{align*}
\mathcal{X}\xrightarrow{L}\Theta(Y)\rightarrow\Theta(Y/V)
\end{align*}
which yields by \cite[III,§3]{Lang} a subbundle $E(L,V)$ of $\mathcal{X}$ such that the total space is given by the kernel of this map. More precisely, the fibres of $E(L,V)$ can be described explicitely by
\begin{align*}
\{u\in\mathcal{X}_{\lambda}:L_{\lambda}u\in V\},\quad \lambda\in\Lambda.
\end{align*} 
We now define the index bundle of $L\in\mathfrak{F}_{0,c}(\mathcal{X},\Theta(Y))$ as the element
\begin{align*}
\ind(L)=[E(L,V),\Theta(V),L\mid_{E(L,V)}]\in K_c(\Lambda).
\end{align*}
Besides well definedness, the index bundle has the following properties:
\begin{itemize}
\item Let $L\in\mathfrak{F}_{0,c}(\mathcal{X},\Theta(Y))$ such that $L_\lambda\in GL(\mathcal{X}_\lambda,Y)$ for every $\lambda\in\Lambda$. Then $\ind(L)=0\in K_c(\Lambda)$.
\item If $H:I\times\mathcal{X}\rightarrow\Theta(Y)$ is a homotopy through bundle morphisms with compact support, then
\begin{align*}
\ind(H_0)=\ind(H_1).
\end{align*}
\item Let $L\in\mathfrak{F}_{0,c}(\mathcal{X},\Theta(Y))$ and $M\in\mathfrak{F}_{0,c}(\Theta(Y),\Theta(Z))$. Then
\begin{align*}
\ind(M\circ L)=\ind(M)+\ind(L).
\end{align*}
\end{itemize}
The proofs of these properties are based on the basic properties of $K$-theory with compact supports as stated in the previous section and are left to the reader.\\
Let $\mathcal{A}:I\rightarrow\mathcal{FS}(W,H)$ be a path with invertible ends. We extend $\mathcal{A}$ on $\mathbb{R}$ by setting $\mathcal{A}_t=\mathcal{A}_0$ for $t<0$ and $\mathcal{A}_t=\mathcal{A}_1$ for $t>1$. Moreover, if we consider the pointwise complexification $\mathcal{A}_t^{\mathbb{C}}$, we obtain a path in $\mathcal{FS}(W^\mathbb{C},H^\mathbb{C})$. Finally, we extend to a family on the complex plane by
\begin{align*}
\overline{\mathcal{A}^\mathbb{C}}_zu:=\mathcal{A}^\mathbb{C}_tu+is\cdot u,\quad z=t+is\in\mathbb{C}.
\end{align*}
Now we can regard $\overline{\mathcal{A}^\mathbb{C}}$ as a Banach bundle morphism between the trivial bundles $\Theta(W^\mathbb{C})$ and $\Theta(H^\mathbb{C})$ over $\mathbb{C}$. Moreover, due to the selfadjointness of $\mathcal{A}^\mathbb{C}_t$, $t\in\mathbb{R}$, when considered as unbounded operators on $H^\mathbb{C}$, we see that $\overline{\mathcal{A}^\mathbb{C}}_z\in GL(W^\mathbb{C},H^\mathbb{C})$ whenever $z=t+is\notin\mathbb{R}\times\{0\}$. Hence  $\overline{\mathcal{A}^\mathbb{C}}\in\mathfrak{F}_{0,c}(\Theta(W^\mathbb{C}),\Theta(H^\mathbb{C}))$ and the index bundle of $\overline{\mathcal{A}^\mathbb{C}}$ is defined.
 
\begin{lemma}\label{compspec}
The first Chern number of $\ind(\overline{\mathcal{A}^\mathbb{C}})\in K_c(\mathbb{C})$ coincides with the spectral flow of $\mathcal{A}$.
\end{lemma}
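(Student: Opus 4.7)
The plan is to define $\mu(\mathcal{A}) := c_1(\ind(\overline{\mathcal{A}^\mathbb{C}}))$ for every continuous path $\mathcal{A}:I\to\mathcal{FS}(W,H)$ with invertible ends and to invoke the uniqueness part of Theorem \ref{Robbin-Salamon Theorem}. By Remark \ref{remSF}, it is enough to verify that $\mu$ satisfies the axioms (1), (2), (3) and (5); uniqueness then forces $\mu = \sfl$.

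The first three axioms will follow essentially for free from the structural properties of the index bundle listed right before the lemma and from the functoriality of $c_1$ on $K_c(\mathbb{C})$. Given a continuous homotopy $\tau\mapsto\mathcal{A}^\tau$ of paths in $\mathcal{FS}(W,H)$ with invertible ends, the extension-plus-complexification construction produces a homotopy in $\mathfrak{F}_{0,c}(\Theta(W^\mathbb{C}),\Theta(H^\mathbb{C}))$; the supports stay inside the fixed compact rectangle $[0,1]\times\{0\}\subset\mathbb{C}$ because each $\overline{(\mathcal{A}^\tau)^\mathbb{C}}_z$ is invertible off the real axis by selfadjointness and coincides with the invertible ends for $\Re z\notin[0,1]$, giving axiom (1). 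For a constant invertible path, the same observation shows that $\overline{\mathcal{A}^\mathbb{C}}_z=\mathcal{A}^\mathbb{C}+is\cdot\mathrm{id}$ is invertible for all $z\in\mathbb{C}$, so the index bundle vanishes, giving axiom (2). For a direct sum $\mathcal{A}^1\oplus\mathcal{A}^2$, the complexified extension decomposes fibrewise as $\overline{(\mathcal{A}^1)^\mathbb{C}}\oplus\overline{(\mathcal{A}^2)^\mathbb{C}}$; choosing $V=V_1\oplus V_2$ in the definition of the index bundle realizes $\ind(\overline{(\mathcal{A}^1\oplus\mathcal{A}^2)^\mathbb{C}})$ as the corresponding sum in $K_c(\mathbb{C})$, and the additivity of $c_1$ then settles axiom (3).

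The only genuine computation will be axiom (5). For $W=H=\mathbb{R}$ with $\mathcal{A}_t=\arctan(t-\tfrac{1}{2})$, the complexified extension is the scalar map $\overline{\mathcal{A}^\mathbb{C}}_z=\arctan(\Re z-\tfrac{1}{2})+i\,\Im z$ on $\mathbb{C}$, whose only non-invertibility point is $z=\tfrac{1}{2}$. Taking $V=\mathbb{C}$ gives $E(\overline{\mathcal{A}^\mathbb{C}},V)=\Theta(\mathbb{C})$, and both bundles are canonically trivial, so Lemma \ref{chern} identifies $\mu(\mathcal{A})$ with the winding number of the scalar function above around $0$ along a small positively oriented circle about $\tfrac{1}{2}$. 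A first-order expansion near $\tfrac{1}{2}$ gives $\overline{\mathcal{A}^\mathbb{C}}_z=(z-\tfrac{1}{2})+O(|z-\tfrac{1}{2}|^{3})$, so the winding number equals $+1$. The step requiring the most care is this final computation, specifically matching the sign convention of the winding number in Lemma \ref{chern} with the orientation hidden in the extension $\overline{\mathcal{A}^\mathbb{C}}_z=\mathcal{A}^\mathbb{C}_t+is\cdot u$; the remaining verifications are essentially bookkeeping against the bullet-point properties recorded in Sections 3.1 and 3.2.
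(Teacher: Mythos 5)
Your proposal is correct and follows essentially the same route as the paper: verify axioms (1), (2), (3), (5) of Theorem \ref{Robbin-Salamon Theorem} (dropping (4) by Remark \ref{remSF}) for $\mu(\mathcal{A})=c_1(\ind(\overline{\mathcal{A}^\mathbb{C}}))$ using the homotopy invariance, vanishing and additivity properties of the index bundle, and compute the normalization example via Lemma \ref{chern}. Your treatment of the scalar case (trivializing with $V=\mathbb{C}$ and computing the winding number of $\arctan(t-\tfrac12)+is$ to be $+1$) matches the paper's computation, only spelled out in slightly more detail.
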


\begin{proof}
It is enough to check the assumptions of theorem \ref{Robbin-Salamon Theorem} (remember remark \ref{remSF}!).\\
At first, (2) and (3) follow immediately from the corresponding properties of the index bundle. Moreover, if $\mathcal{A}^1$ and $\mathcal{A}^2$ are homotopic by means of a homotopy $H:I\times I\rightarrow\mathcal{FS}(W,H)$ such that $H(\lambda,0)$ and $H(\lambda,1)$ are invertible for all $\lambda\in I$, we can build $\overline{H}^\mathbb{C}$ in the same way as described above for a single path of operators. We obtain a homotopy $\overline{H}^\mathbb{C}:I\times\Theta(W^\mathbb{C})\rightarrow\Theta(H^\mathbb{C})$ through bundle morphisms with compact support which is actually a homotopy between $\overline{(\mathcal{A}^1)^\mathbb{C}}$ and $\overline{(\mathcal{A}^2)^\mathbb{C}}$. This implies the first property of theorem \ref{Robbin-Salamon Theorem} by homotopy invariance of the index bundle.\\
Finally, if $W=H=\mathbb{R}$ and $\mathcal{A}_t=\arctan(t-\frac{1}{2})$, the index bundle of $\overline{\mathcal{A}^\mathbb{C}}$ is given by $[\Theta(\mathbb{C}),\Theta(\mathbb{C}),a]$, where $a$ is the map 
\begin{align*}
a:\Theta(\mathbb{C})\rightarrow\Theta(\mathbb{C}),\; (z,u)\mapsto (\arctan(t-\frac{1}{2})+is)u,\quad z=t+is\in\mathbb{C}.
\end{align*}
Now $c_1([\Theta(\mathbb{C}),\Theta(\mathbb{C}),a])=1$ follows immediately from lemma \ref{chern}.
\end{proof}

\subsection{Proof of Theorem \ref{MIT}} 
Let $\gamma:I\rightarrow M$ be a geodesic such that $1$ is not a conjugate instant. Moreover, let $L_t:H^1_0(I,\mathbb{R}^n)\rightarrow H^1_0(I,\mathbb{R}^n)$ and $\mathcal{A}_t:H^2(I,\mathbb{R}^n)\cap H^1_0(I,\mathbb{R}^n)\rightarrow L^2(I,\mathbb{R}^n)$ be the associated paths of operators as introduced in \eqref{RIESZDARSTELLUNG-2} and \eqref{JacobiDGL} respectively. Note that since $1$ is not a conjugate instant, $\mathcal{A}_0$ and $\mathcal{A}_1$ are invertible and hence the spectral flow $\sfl(\mathcal{A})$ is defined by theorem \ref{Robbin-Salamon Theorem}. 

\subsubsection*{Step 1: $\mu_{spec}(\gamma)=\sfl(\mathcal{A})$}
The first part of the proof coincides with the one in \cite{Pejsachowicz} and shows the equality of the spectral flow of $L$ and $\mathcal{A}$ up to sign. We will use the machinery of crossing forms as developed in \cite{Robbin-Salamon} and \cite{Specflow}, respectively\footnote{However, if the underlying metric $g$ is Riemannian, the spectral flow of $L$ is just minus the Morse index of $L_1$, and the claim follows from an easy integration by parts argument and the spectral decomposition of $\mathcal{A}_1$.}. Accordingly, we consider the perturbed paths $\mathcal{A}^\delta_t=\mathcal{A}_t+\delta id$ and $L_t^\delta$, where $\langle L^\delta_tu,u\rangle_{H^1_0(I,\mathbb{R}^n)}=q_t(u)-\delta\|u\|^2_{L^2(I,\mathbb{R}^n)}$, $u\in H^1_0(I,\mathbb{R}^n)$. Now we can find $\delta>0$ such that $\sfl(\mathcal{A}^\delta)=\sfl(\mathcal{A})$, $\sfl(L^\delta)=\sfl(L)$ and the so-called crossing forms defined as
\begin{align*}
\Gamma(\mathcal{A}^\delta,t)=\langle\dot {\mathcal{A}}^\delta_t\cdot,\cdot\rangle_{L^2(I,\mathbb{R}^n)}\mid_{\ker\mathcal{A}^\delta_t},\quad\Gamma(L^\delta,t)=\langle \dot{L}^\delta_t\cdot,\cdot\rangle_{H^1_0(I,\mathbb{R}^n)}\mid_{\ker L^\delta_t}
\end{align*}
are nondegenerate for all $t\in I$. Here $\dot{}$ means differentiation with respect to $t$ and the norm topology. Then the kernels of $\mathcal{A}_t^\delta$ and $L_t^\delta$ are nontrivial for only finitely many instants $t\in I$ and the spectral flows can be computed as
\begin{align*}
\sfl(\mathcal{A}^\delta)=\sum_{t\in I}{\sgn\Gamma(\mathcal{A}^\delta,t)}\quad\text{and}\quad\sfl(L^\delta)=\sum_{t\in I}{\sgn\Gamma(L^\delta,t)}
\end{align*}
respectively. Now we obtain from integration by parts
\begin{align}\label{intbypart}
\langle L^\delta_tu,v\rangle_{H^1_0(I,\mathbb{R}^n)}=-\langle \mathcal{A}^\delta_tu,v\rangle_{L^2(I,\mathbb{R}^n)},\quad u\in H^2(I,\mathbb{R}^n)\cap H^1_0(I,\mathbb{R}^n),\; v\in H^1_0(I,\mathbb{R}^n)
\end{align}
and therefore $\mu_{spec}(\gamma)=-\sfl(L)=\sfl(\mathcal{A})$.\\
Hence we have to compute the spectral flow of $\mathcal{A}$. We will use the result of lemma \ref{compspec} and concentrate in the following steps on the computation of the index bundle of $\overline{\mathcal{A}^\mathbb{C}}$.\\
In the proof of theorem \ref{MIT} in \cite{Pejsachowicz} the equality \eqref{intbypart} is used with the opposite sign on the right hand side. Hence also their final result and, accordingly, their definition of the conjugate index differs from ours by a sign.  

\subsubsection*{Step 2: Simplification of the Index Bundle I:\quad$\ind(\overline{\mathcal{A}^\mathbb{C}})=\ind(M)$} 
The aim of this step is to show that the index bundle of $\overline{\mathcal{A}^\mathbb{C}}$ is equal to the index bundle of the family
\begin{align*}
\begin{split}
&M_z:\mathcal{H}\rightarrow L^2(I,\mathbb{C}^{2n}),\quad z=t+is\in\mathbb{C},\\
&M_zw(x)=\sigma w'(x)+H_z(x)w(x),
\end{split}
\end{align*}
where $\mathcal{H}=\{w\in H^1(I,\mathbb{C}^{2n}):w(0),w(1)\in\{0\}\times\mathbb{C}^n\}$ and
\begin{align*}
H_z(x)=\begin{pmatrix}
-S_z(x)&0\\
0&-J
       \end{pmatrix},\quad S_z(x):=S_t(x)+isI,\,z=t+is\in\mathbb{C}.
\end{align*}
Here $\sigma$ denotes the symplectic matrix
\begin{align*}
\begin{pmatrix}
0&-id\\
id&0
     \end{pmatrix}.
\end{align*}
More precisely, $M_zw$, $w=(w_1,w_2)\in\mathcal{H}$, is given by
\begin{align*}
M_zw=\begin{pmatrix}
0&-id\\
id&0
     \end{pmatrix}\begin{pmatrix}
w'_1\\
w'_2
\end{pmatrix}+\begin{pmatrix}
-S_z&0\\
0&-J
\end{pmatrix}\begin{pmatrix}
w_1\\
w_2
\end{pmatrix}=
\begin{pmatrix}
-w'_2-S_zw_1\\
w'_1-Jw_2
\end{pmatrix}.
\end{align*}
We define operators $j:H^2(I,\mathbb{C}^n)\cap H^1_0(I,\mathbb{C}^n)\hookrightarrow\mathcal{H}$ and $\iota:L^2(I,\mathbb{C}^n)\hookrightarrow L^2(I,\mathbb{C}^{2n})$ by
\begin{align}\label{mapj}
j(u)=(u,Ju'),\qquad\iota(u)=(-u,0)
\end{align}
and obtain a diagram
\begin{align}\label{diagram2}
\xymatrix{\mathcal{H}\ar[r]^{M_z}&L^2(I,\mathbb{C}^{2n})\\
H^2(I,\mathbb{C}^n)\cap H^1_0(I,\mathbb{C}^n)\ar[u]^{j}\ar[r]^{\qquad\quad\overline{\mathcal{A}^\mathbb{C}}_z}&L^2(I,\mathbb{C}^n)\ar[u]_{\iota}
}
\end{align}
The diagram is commutative because
\begin{align*}
\iota(\overline{\mathcal{A}^\mathbb{C}}_zu)=(-Ju''(x)-S_z(x)u(x),0)=M_z(j(u))
\end{align*}
for $u\in H^2(I,\mathbb{C}^n)\cap H^1_0(I,\mathbb{C}^n)$, $z\in\mathbb{C}$. Now, let $V\subset L^2(I,\mathbb{C}^{n})$ be a finite dimensional subspace such that 
\begin{align*}
\im(\overline{\mathcal{A}^\mathbb{C}}_z)+V=L^2(I,\mathbb{C}^{n}),\; z\in\mathbb{C}.
\end{align*}
Setting $w_2=Jw'_1$, we obtain
\begin{align*}
\{(-Jw''_1-S_zw_1,0):w_1\in H^2(I,\mathbb{C}^n)\cap H^1_0(I,\mathbb{C}^n)\}+\iota(V)=L^2(I,\mathbb{C}^n)\oplus\{0\}
\end{align*}
and using $\{w'_1-Jw_2:(w_1,w_2)\in\mathcal{H}\}=L^2(I,\mathbb{C}^n)$, it is easy to see that
\begin{align*}
\im(M_z)+\iota(V)=L^2(I,\mathbb{C}^{2n}),\quad z\in\mathbb{C}.
\end{align*}
Hence $E(M,\iota(V))$ is defined. Moreover, by commutativity of \eqref{diagram2}, $j$ induces a bundle morphism $E(\overline{\mathcal{A}^\mathbb{C}},V)\rightarrow E(M,\iota(V))$. Taking into account that $j$ is injective and both bundles have the same fibre dimensions, it is clear that $j$ is actually a bundle isomorphism. We finally obtain a commutative diagram

\begin{align*}
\xymatrix{E(M,\iota(V))\ar[r]^{\quad M}&\Theta(\iota(V))\\
E(\overline{\mathcal{A}^\mathbb{C}},V)\ar[u]^{j}_{\cong}\ar[r]^{\qquad\overline{\mathcal{A}^\mathbb{C}}}&\Theta(V)\ar[u]^{\cong}_{\iota}
}
\end{align*}
showing $\ind(\overline{\mathcal{A}^\mathbb{C}})=\ind(M)$.

\subsubsection*{Step 3: Simplification of the Index Bundle II:\quad $\ind(M)=\ind(N)$}

We define a family of topological isomorphisms by
\begin{align*}
U:\mathbb{C}\rightarrow GL(L^2(I,\mathbb{C}^{2n})),\qquad (U_zw)(x)=\Psi_z(x)w(x),
\end{align*}
where $\Psi_z(x)$, $z\in\mathbb{C}$, is the solution of the initial value problem
\begin{align*}
\begin{cases}
\Psi'_z(x)=\sigma H_z(x)\Psi_z(x),\\
\Psi_z(0)=id.
\end{cases}
\end{align*}
It is easy to see that $\Psi_z(x)$ is symplectic; i.e. $\Psi_z^T(x)\sigma\Psi_z(x)=\sigma$, for all $x\in I$ and $z\in\mathbb{C}$. Here $\cdot^T$ denotes the transpose instead of the conjugate transpose.\\  
Moreover, we define a family
\begin{align*}
N_z:\tilde H_z\rightarrow L^2(I,\mathbb{C}^{2n}),\quad N_zw=U^T_zM_zU_z,\quad z\in\mathbb{C},
\end{align*}
where
\begin{align*}
\tilde H_z=\{w\in H^1(I,\mathbb{C}^{2n}):w(0)\in\{0\}\times\mathbb{C}^n,w(1)\in\Psi^{-1}_z(\{0\}\times\mathbb{C}^{n})\}
\end{align*}
and $\Psi_z:=\Psi_z(1)$. Note that $U_z(\tilde H_z)=\mathcal{H}$ for all $z\in\mathbb{C}$ such that the operators $N_z$ indeed are well defined. We obtain
\begin{align*}
(N_zw)(x)&=(U^T_zM_zU_zw)(x)=\Psi_z(x)^T(\sigma(\Psi_z(x)w(x))'+H_z(x)\Psi_z(x)w(x))\\
&=\Psi_z(x)^T(\sigma\Psi'_z(x)w(x)+\sigma\Psi_z(x)w'(x)+H_z(x)\Psi_z(x)w(x))\\
&=\Psi_z(x)^T(-H_z(x)\Psi_z(x)w(x)+\sigma\Psi_z(x)w'(x)+H_z(x)\Psi_z(x)w(x))\\
&=\Psi_z(x)^T\sigma\Psi_z(x)w'(x)=\sigma w'(x),\quad w\in\tilde H_z.
\end{align*}
Our next goal is to show that $\ind(M)=\ind(N)$, but here we have to handle a family with nonconstant domains. Fortunately, by the following result, the family of domains fit together to a Banach bundle $\tilde{\mathcal{H}}$ and $N$ can be regarded as a bundle morphism from $\tilde{\mathcal{H}}$ to $\Theta(L^2(I,\mathbb{C}^{2n}))$.

\begin{lemma}\label{bundleproj}
Let $\Lambda$ be a topological space, $A:\Lambda\rightarrow GL(m,\mathbb{C})$ a continuous family of invertible matrices and $U,V\subset\mathbb{C}^{m}$ fixed subspaces. Then
\begin{align*} 
\{(\lambda,u)\in\Lambda\times H^1(I,\mathbb{C}^{m}):u(0)\in U, u(1)\in A_\lambda V\}\subset\Lambda\times H^1(I,\mathbb{C}^{m})
\end{align*}
endowed with the induced topology is a Banach bundle.
\end{lemma}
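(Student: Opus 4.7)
The plan is to exhibit the set in question as the kernel of a fibrewise surjective, continuous bundle morphism between Banach bundles over $\Lambda$, and then invoke the very theorem from Lang that the paper has already been using (the one cited to extract $E(L,V)$ from an epimorphism of bundles).

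First I would show that the fibrewise subspace $\mathcal{V}_\lambda := A_\lambda V \subset \mathbb{C}^m$ assembles into a finite-dimensional vector subbundle $\mathcal{V}$ of the trivial bundle $\Lambda \times \mathbb{C}^m$. Fixing a basis $v_1,\ldots,v_k$ of $V$, the vectors $A_\lambda v_1,\ldots,A_\lambda v_k$ form a basis of $A_\lambda V$ depending continuously on $\lambda$, which yields a global trivialization of $\mathcal{V}$. Consequently $\mathcal{Q}' := (\Lambda \times \mathbb{C}^m)/\mathcal{V}$ is a finite-dimensional vector bundle over $\Lambda$, and so is $\mathcal{Q} := \bigl(\Lambda \times (\mathbb{C}^m/U)\bigr) \oplus \mathcal{Q}'$. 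Next I would consider the bundle morphism
$$\Phi: \Lambda \times H^1(I,\mathbb{C}^m) \to \mathcal{Q}, \qquad \Phi(\lambda,u) = \bigl(\lambda,\,[u(0)]_U,\,[u(1)]_{A_\lambda V}\bigr).$$
Continuity follows from continuity of the boundary trace $u \mapsto (u(0),u(1))$ together with the local trivializations of $\mathcal{V}$ constructed above. The morphism $\Phi$ is fibrewise surjective, since $u \mapsto (u(0),u(1))$ admits the continuous right inverse $(a,b) \mapsto (1-x)a + xb$, and each fibre of $\ker\Phi_\lambda$ is a closed subspace of finite codimension in $H^1(I,\mathbb{C}^m)$, hence topologically complemented. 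Lang's theorem then delivers $\ker\Phi$ as a Banach subbundle of $\Lambda \times H^1(I,\mathbb{C}^m)$, and this kernel is by construction exactly the set in the lemma.

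The only real issue is verifying that $\mathcal{V}$ is a genuine subbundle with well-defined quotient; the invertibility of $A_\lambda$ makes the global frame above transparent and sidesteps any delicate local-trivialization argument. An equally valid direct alternative would construct, near each $\lambda_0 \in \Lambda$, a local family of isomorphisms $\Phi_\lambda u(x) = M_\lambda(x) u(x)$ with $M_\lambda \in C^1(I, GL(m,\mathbb{C}))$, $M_\lambda(0) = id$ and $M_\lambda(1) = T_\lambda$, where $T_\lambda \in GL(m,\mathbb{C})$ satisfies $T_{\lambda_0} = id$ and $T_\lambda(A_{\lambda_0}V) = A_\lambda V$ (obtained by extending $A_\lambda A_{\lambda_0}^{-1}$ from $A_{\lambda_0}V$ via the identity on a fixed complement, and writing $T_\lambda = \exp(X_\lambda)$ for $\lambda$ near $\lambda_0$ to guarantee invertibility along the path $M_\lambda(x) = \exp(\rho(x) X_\lambda)$). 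Either approach works, but the bundle-theoretic route is shorter and fits naturally into the formalism already deployed in the paper.
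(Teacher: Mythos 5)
Your argument is correct, but it runs along a different track than the paper's. You exhibit the set as the kernel of the fibrewise surjective morphism $\Phi(\lambda,u)=\bigl(\lambda,[u(0)]_U,[u(1)]_{A_\lambda V}\bigr)$ into the finite-dimensional quotient bundle built from $\mathcal{V}_\lambda=A_\lambda V$ (which is indeed a trivializable subbundle, since $(\lambda,v)\mapsto(\lambda,A_\lambda v)$ is a bundle automorphism carrying $\Lambda\times V$ onto it), and you then quote the kernel-of-a-split-surjection criterion from Lang, i.e.\ the same device the paper uses to produce $E(L,V)$. The paper instead goes the ``dual'' way: it writes down an explicit continuous family of projections
\begin{align*}
(P_\lambda w)(x)=w(x)-(1-x)\pr_{U^\perp}(w(0))-x(id-\tilde P_\lambda)w(1),
\end{align*}
where $\tilde P_\lambda$ is the orthogonal projection onto $A_\lambda V$, checks that $P_\lambda$ projects onto the fibre in question, and invokes the result of Fitzpatrick--Pejsachowicz that the fixed-point set of a continuous family of projections in a Banach space is a Banach subbundle. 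The two routes are essentially equivalent in content (the standard proof of the kernel criterion produces exactly such a family of projections), but they differ in emphasis: your version is shorter, avoids writing any formula, and meshes with the index-bundle formalism already in play; the paper's version is completely explicit and, importantly, its cited result is stated for an arbitrary topological parameter space, whereas Lang's bundle machinery is set up over ($C^p$-)manifolds. Since the lemma is stated for a general topological space $\Lambda$, your citation is slightly mismatched in generality; this is a citation-level caveat rather than a mathematical gap (the split-surjection criterion holds verbatim in the continuous category, and in the application $\Lambda=\mathbb{C}$ anyway), but if you keep your route you should either prove that criterion for topological bases or note that it reduces to the projection-family statement the paper cites. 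Your sketched alternative with local isomorphisms $M_\lambda(x)$ is also viable but unnecessary given the global frame coming from the invertibility of $A_\lambda$.
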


\begin{proof}
Let $\tilde P:\Lambda\rightarrow M(m,\mathbb{C})$ be the family of orthogonal projections in $\mathbb{C}^{m}$ such that $\im\tilde P_\lambda= A_\lambda V$, $\lambda\in\Lambda$. Define 
\begin{align*}
P:\Lambda\rightarrow\mathcal{L}(H^1(I,\mathbb{C}^{m})),\quad (P_\lambda w)(x)=w(x)-(1-x)\pr_{U^\perp}(w(0))-x(id-\tilde P_\lambda)w(1),
\end{align*}
where $pr_{U^\perp}:\mathbb{C}^{m}\rightarrow\mathbb{C}^{m}$ denotes the orthogonal projection onto $U^\perp$. It is easy to show that $P_\lambda$ is a projection onto $\{u\in H^1(I,\mathbb{C}^{m}):u(0)\in U, u(1)\in A_\lambda V\}$.\\ 
Now the claim follows from \cite[Proposition 3.2.1]{FiPejsachowicz}, where it is proven that, given a continuous family $P_\lambda$ of projections in a Banach space $X$, the set $\{(\lambda,u)\in\Lambda\times X:P_\lambda u=u\}\subset\Lambda\times X$ is a Banach subbundle.
\end{proof}

Since $\tilde{\mathcal{H}}$ carries the subspace topology of $\Lambda\times H^1(I,\mathbb{C}^{2n})$, it is clear that the restriction of $U$ defines a bundle morphism from $\tilde{\mathcal{H}}$ to $\Theta(\mathcal{H})$. Hence we can decompose $N$ as the following sequence of bundle morphisms:
\begin{align*}
\tilde{\mathcal{H}}\xrightarrow{U} \Theta(\mathcal{H})\xrightarrow{M} \Theta(L^2(I,\mathbb{C}^{2n}))\xrightarrow{U^T} \Theta(L^2(I,\mathbb{C}^{2n})).
\end{align*} 
Using the properties of the index bundle stated in section 3.2, we finally compute
\begin{align*}
\ind(N)&=\ind(U^TMU)=\ind(U^T)+\ind(M)+\ind(U)=\ind(M).
\end{align*}
The family $N$ is simple enough to compute its index bundle explicitely, which is the subject of the following final step.

\subsubsection*{Step 4: Proof of the Theorem: $c_1(\ind(N))=\mu_{con}(\gamma)$}
If we set $Y_1=\mathbb{C}^{2n}\subset L^2(I,\mathbb{C}^{2n})$, the subspace of constant functions, and 
\begin{align*}
Y_2=\{y\in L^2(I,\mathbb{C}^{2n}):\int^1_0{y(x)dx}=0\},
\end{align*}
we have a direct sum decomposition $L^2(I,\mathbb{C}^{2n})=Y_1\oplus Y_2$.\\
Now, for any fixed $y\in Y_2$, we define $F\in H^1(I,\mathbb{C}^{2n})$ by
\begin{align*}
F(x)=-\sigma\int^x_0{y(s)ds}.
\end{align*}
Since $F(0)=0$ and $F(1)=0\in\Psi^{-1}_z(\{0\}\times\mathbb{C}^n)$, we conclude $F\in\tilde H_z$ and, moreover, $N_zF=y$, $z\in\mathbb{C}$. Hence $N_z(\tilde H_z)\supset Y_2$, showing that
\begin{align*}
\im(N_z)+Y_1=L^2(I,\mathbb{C}^{2n})\quad\text{ for all } z\in\mathbb{C}.
\end{align*}
We obtain
\begin{align*}
\ind(N)=[E(N,Y_1),\Theta(Y_1),N]\in K_c(\mathbb{C}),
\end{align*}
where the total space of the bundle $E(N,Y_1)$ is given by
\begin{align*}
&\{(z,w)\in\mathbb{C}\times H^1(I,\mathbb{C}^{2n}):w\in\tilde H_z,\sigma w'\in Y_1\}\\
&=\{(z,w)\in\mathbb{C}\times H^1(I,\mathbb{C}^{2n}):w\in\tilde H_z, w'\equiv const.\}\\
&=\{(z,w)\in\mathbb{C}\times H^1(I,\mathbb{C}^{2n}):w(x)=(1-x)a+xb,a\in\{0\}\times\mathbb{C}^n,b\in\Psi^{-1}_z(\{0\}\times\mathbb{C}^n)\}.
\end{align*}

With the isomorphisms
\begin{align*}
\Phi_1&:\Theta(Y_1)\rightarrow\Theta(\mathbb{C}^{2n}),\qquad\qquad \Phi_1(z,u)=(z,u(0)),\\
\Phi_2&:E(N,Y_1)\rightarrow\Theta(\mathbb{C}^{2n}),\quad\Phi_2(z,w)=(z,\pr_2(w(0)),\pr_2(\Psi_zw(1)))
\end{align*}
and the morphism
\begin{align*}
\tilde{N}:\Theta(\mathbb{C}^{2n})\rightarrow\Theta(\mathbb{C}^{2n}),\quad\tilde{N}(z,a,b)=\sigma(\Psi^{-1}_z(0,b)-(0,a)),
\end{align*}
we have a commutative diagram
\begin{align*}
\xymatrix{E(N,Y_1)\ar[d]^{\cong}_{\Phi_2}\ar[r]^{\quad N}&\Theta(Y_1)\ar[d]^{\Phi_1}_{\cong}\\
\Theta(\mathbb{C}^{2n})\ar[r]^{\tilde{N}}&\Theta(\mathbb{C}^{2n})
}
\end{align*}

This implies

\begin{align*}
[E(N,Y_1),\Theta(Y_1),N]=[\Theta(\mathbb{C}^{2n}),\Theta(\mathbb{C}^{2n}),\tilde{N}]\in K_c(\mathbb{C}).
\end{align*}

Note that the map $j(u)=(u,Ju')$, already introduced in \eqref{mapj}, is a bijection between the spaces of solutions of the differential equations $Ju''(x)+S_z(x)u(x)=0$ and $w'(x)=\sigma H_z(x)w(x)$, $z\in\mathbb{C}$. Hence $\Psi_z$ is of the form
\begin{align*}
\begin{pmatrix}
\ast & b_z\\
\ast & \ast
\end{pmatrix},
\end{align*}

where $b_z$ is the matrix family from the definition of the conjugate index.\\ 
Now $\tilde{N}$ is given by

\begin{align*}
\begin{split}
\tilde{N}(z,a,b)&=\sigma\left(\Psi^{-1}_z\begin{pmatrix}
0\\
b
                                                        \end{pmatrix}
-\begin{pmatrix}
0\\
a
 \end{pmatrix}\right)
=\sigma\left(-\sigma\Psi^T_z\sigma \begin{pmatrix}
0\\
b
                                                        \end{pmatrix}
-\begin{pmatrix}
0\\
a
 \end{pmatrix}\right)\\
&=\Psi^T_z\begin{pmatrix}
-b\\
0
 \end{pmatrix}+\begin{pmatrix}
a\\
0
 \end{pmatrix}=
\begin{pmatrix}
id&\ast \\
0&-b^T_z
\end{pmatrix}\begin{pmatrix}
a\\
b
\end{pmatrix},\quad z\in\mathbb{C},\; (a,b)\in\mathbb{C}^{2n},
\end{split}
\end{align*}

and using homotopy invariance of the index bundle, we get

\begin{align*}
[\Theta(\mathbb{C}^{2n}),\Theta(\mathbb{C}^{2n}),\tilde{N}]&=[\Theta(\mathbb{C}^{n})\oplus\Theta(\mathbb{C}^n),\Theta(\mathbb{C}^{n})\oplus\Theta(\mathbb{C}^n),id\oplus(-b^T)]\\
&=[\Theta(\mathbb{C}^{n})\oplus\Theta(\mathbb{C}^n),\Theta(\mathbb{C}^{n})\oplus\Theta(\mathbb{C}^n),id\oplus b^T]\\
&=[\Theta(\mathbb{C}^{n}),\Theta(\mathbb{C}^{n}),b^T].
\end{align*}

Finally, using lemma \ref{chern} and the invariance of the determinant under transposition, we obtain $c_1(\ind(N))=\mu_{con}(\gamma)$, which completes the proof of the Morse index theorem, theorem \ref{MIT}.

\thebibliography{9999999}
\bibitem[APS76]{AtiyahPatodi} M.F. Atiyah, V.K. Patodi, I.M. Singer, \textbf{Spectral Asymmetry and Riemannian Geometry III}, Math. Proc. Cambridge Philos. Soc., 1979, 71-99
\bibitem[Fe91]{Fedosov} B.V. Fedosov, \textbf{Index Theorems}, Encyclopaedia Math. Sci. \textbf{65}, Partial Differential Equations VIII, 1991, 155-251
\bibitem[Hel94]{HELFER} A.D. Helfer, \textbf{Conjugate Points on Spacelike Geodesics or Pseudo-Selfadjoint Morse-Sturm-Liouville Systems}, Pacific J. Math. \textbf{164}, 1994, 321-340
\bibitem[FP88]{FiPejsachowicz} P.M. Fitzpatrick, J. Pejsachowicz, \textbf{The Fundamental Group of the Space of Linear Fredholm Operators and the Global Analysis of Semilinear Equations}, Contemp. Math. \textbf{72}, 1988, 47-87
\bibitem[FPR99]{Specflow} P.M. Fitzpatrick, J. Pejsachowicz, L. Recht, \textbf{Spectral Flow and Bifurcation of Critical Points of Strongly-Indefinite Functionals-Part I: General Theory}, J. Funct. Anal. \textbf{162}, 1999, 52-95
\bibitem[La95]{Lang} S. Lang, \textbf{Differential and Riemannian Manifolds}, Grad. Texts in Math. \textbf{160}, Springer, 1995
\bibitem[Mi69]{MilnorMorse} J.W. Milnor, \textbf{Morse Theory}, Princeton Univ. Press, 1969
\bibitem[MPP05]{Pejsachowicz} M. Musso, J. Pejsachowicz, A. Portaluri, \textbf{A Morse Index Theorem for Perturbed Geodesics on Semi-Riemannian Manifolds}, Topol. Methods Nonlinear Anal. \textbf{25}, 2005, 69-99, arXiv:math/0311147
\bibitem[Pe88]{PejsachowiczII} J. Pejsachowicz, \textbf{K-theoretic Methods in Bifurcation Theory}, Contemp. Math. \textbf{72}, 1988, 47-87
\bibitem[PT02]{PiccioneMITinSRG} P. Piccione, D.V. Tausk, \textbf{The Morse Index Theorem in Semi-Riemannian Geometry}, Topology \textbf{41}, 2002, 1123-1159, arXiv:math/0011090
\bibitem[RS95]{Robbin-Salamon} J. Robbin, D. Salamon,\textbf{The Spectral Flow and the Maslov Index}, Bull. Lond. Math. Soc. \textbf{27}, 1995, 1-33
\bibitem[Wa07]{DiplomIch} N.Waterstraat, \textbf{Der Spektralindex perturbierter semi-Riemannscher Geod\"aten als Windungszahl}, diploma thesis, University of G\"ottingen, 2007

\vspace{1cm}

GEORG-AUGUST-UNIVERSIT\"AT G\"OTTINGEN\\
MATHEMATISCHES INSTITUT\\
BUNSENSTRA{\ss}E 3-5\\
D-37073 G\"OTTINGEN\\
GERMANY\\
E-MAIL ADDRESS: waterstraat@daad-alumni.de
\end{document}